\documentclass[12pt]{article}
\usepackage[utf8]{inputenc}
\usepackage{amsmath, amssymb, amsthm}
\usepackage{geometry}
\usepackage{url}
\usepackage{hyperref}

\newtheorem{theorem}{Theorem}[section]
\newtheorem{lemma}[theorem]{Lemma}
\newtheorem{proposition}[theorem]{Proposition}
\newtheorem{corollary}[theorem]{Corollary}
\theoremstyle{remark}
\newtheorem{remark}[theorem]{Remark}

\newcommand{\lcm}{\operatorname{lcm}}
\newcommand{\Z}{\mathbb{Z}}

\title{Two Questions on $G$-Harmonic Tuples}
\author{Murali Menon}
\date{\today}

\begin{document}

\maketitle

\begin{abstract}
An $n$-tuple of positive integers is $G$-harmonic if there are subgroups of $G$ having those indices whose cosets can be chosen pairwise disjoint, and $\Z$-harmonic if there are pairwise disjoint residue classes with those moduli. Ginosar asked whether every $G$-harmonic tuple is $\Z$-harmonic. Margolis and Schnabel proved this for tuples of length at most $4$, and analysed a particular family of length-$5$ tuples that would yield a counterexample if any member were $G$-harmonic. We show that the bound $4$ is sharp: $(6,6,6,10,15)$ is $A_5$-harmonic but not $\Z$-harmonic. Moreover, the five pairwise disjoint cosets realising this tuple can be extended to a coset partition of $A_5$ using only cosets of indices $6$, $10$, and $15$. The index tuple of this partition is not $\Z$-harmonic; because its indices repeat, this does not contradict the Herzog--Sch\"onheim conjecture. We also prove that no member of the length-$5$ family analysed by Margolis and Schnabel in connection with possible counterexamples is $G$-harmonic for any group $G$.
\end{abstract}

\section{Introduction}

Let $G$ be a group. An $n$-tuple of positive integers $(a_1,\ldots,a_n)$ is called \emph{$G$-harmonic} if there exist subgroups $U_i\le G$ of index $a_i$ and elements $g_i\in G$ such that the cosets $g_1U_1,\ldots,g_nU_n$ are pairwise disjoint \cite[Definition~1.1]{MargolisSchnabel}. The same tuple is \emph{$\Z$-harmonic} if there exist integers $r_i$ with
\[
r_i \not\equiv r_j \pmod{\gcd(a_i,a_j)} \qquad (i \neq j),
\]
equivalently if the residue classes $r_i \bmod a_i$ can be chosen pairwise disjoint \cite[p.~2]{MargolisSchnabel}. We call such a choice of integers $(r_1,\ldots,r_n)$ a \emph{witness} to the $\Z$-harmonicity of $(a_1,\ldots,a_n)$.

The interplay between these notions sits inside the study of coset partitions of groups initiated by the Herzog--Sch\"onheim conjecture \cite{HerzogSchonheim}, which reduces to the case of finite groups \cite{KorecZnam}; see \cite{GinosarSchnabel2011,Ginosar2018} for related work on multiplicities in coset partitions. Ginosar asked the following, recorded by Margolis and Schnabel \cite[Question~1]{MargolisSchnabel}.

\medskip
\noindent\textbf{Question 1.} Is every $G$-harmonic tuple also $\Z$-harmonic?
\medskip

Margolis and Schnabel proved that the answer is \emph{yes} for $n \le 4$ \cite[Theorem~B]{MargolisSchnabel}. For $n=5$ they restricted attention to tuples whose indices follow a particular pattern, described in their Lemma~4.6 below, and showed that a harmonic realisation of such a tuple must satisfy stringent additional conditions; they did not address length-$5$ tuples outside this pattern.

\begin{lemma}[Margolis--Schnabel {\cite[Lemma~4.6]{MargolisSchnabel}}]\label{lem:MS46}
Let $U_1,\ldots,U_5$ be subgroups of $G$ with $[G:U_i] = 3r_i$ for $1 \le i \le 2$ and $[G:U_j] = 6r_j$ for $3 \le j \le 5$, where $r_1,\ldots,r_5$ are pairwise coprime and $r_1,r_2$ are odd. If cosets of $U_1,\ldots,U_5$ can be chosen pairwise disjoint, then, up to a permutation of $\{1,2\}$ and a permutation of $\{3,4,5\}$, the values $\alpha(i,j)$ $(i<j)$ are
\[
\bigl(\alpha(i,j)\bigr)=
\begin{array}{c|ccccc}
 & 1 & 2 & 3 & 4 & 5\\\hline
1 & \cdot & 2 & 1 & 2 & 2\\
2 & & \cdot & 1 & 2 & 2\\
3 & & & \cdot & 3 & 1\\
4 & & & & \cdot & 1\\
5 & & & & & \cdot
\end{array}
\qquad\text{and}\qquad 3\mid r_2,\ \ 2\mid r_3.
\]
\end{lemma}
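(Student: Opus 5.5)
The plan is to work with the double-coset invariant $\alpha(i,j)$ as defined by Margolis and Schnabel, which I take to be the number of $(U_i,U_j)$-double cosets in $G$. If their definition differs, the steps below are meant to be adapted to it. Two cosets $g_iU_i$ and $g_jU_j$ are disjoint exactly when $g_i^{-1}g_j\notin U_iU_j$. So pairwise disjointness forces $\alpha(i,j)\ge 2$ for all $i<j$, together with a compatible choice of "which double coset" for each pair.

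The numerical input comes from sizes. Each double coset $U_igU_j$ has size $|U_i||U_j|/|U_i\cap U_j^{g^{-1}}|$, and its index data is divisible by $\lcm$ of the relevant indices. Because the $r_k$ are pairwise coprime and $r_1,r_2$ are odd, the gcds are rigid: $\gcd(3r_1,3r_2)=3$, $\gcd(3r_i,6r_j)=3$, and $\gcd(6r_j,6r_k)=6$. This should bound $\alpha(i,j)$ by a small function of the gcd and restrict which double-coset size patterns can occur.

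\emph{Step 1.} Using these size equations, I will list for each pair type the admissible values of $\alpha(i,j)\ge 2$. I expect roughly $\alpha\in\{2,3\}$ between a $3r$ and a $6r$ index, with similar short lists for the other pair types, together with divisibility side conditions. For example, I expect $\alpha(3,4)=3$ to force a factor $2$ into some $r_j$, and a value $2$ between indices $3r_1$ and $3r_2$ to force $3\mid r_2$.

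\emph{Step 2.} I will use a counting argument on the complement. Fix $g_1U_1$ and translate so that $g_1=1$. Every other coset $g_kU_k$ must lie inside $G\setminus U_1U_k$. A density count shows that $U_1$ is already a large piece relative to the $6r_j$-cosets, and forces the pair $(1,3)$ into a configuration with $\alpha=1$-type behaviour in the paper's normalisation. Running the same argument for the second index-$3r$ subgroup gives the first two rows of the table.

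\emph{Step 3.} Among $U_3,U_4,U_5$, I will apply the Margolis--Schnabel length-$3$ and length-$4$ analysis (their Theorem~B machinery) to the subtuples $(6r_3,6r_4,6r_5)$ and $(3r_i,6r_3,6r_4,6r_5)$. This should show that exactly one pair among $\{3,4,5\}$ has the large value $3$, which after relabelling is $(3,4)$, while the other two pairs have value $1$. The symmetry reductions (permuting $\{1,2\}$ and $\{3,4,5\}$) are used to normalise at this point. Finally, the divisibility conditions $3\mid r_2$ and $2\mid r_3$ are read off from the size equations attached to $\alpha(1,2)=2$ and $\alpha(3,4)=3$.

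The main obstacle is the case exclusion in Step 3. One must show that no two of the pairs among $\{3,4,5\}$ can simultaneously have the large value, and that the mixed pairs cannot be uniformly $2$. I would first try a pure inclusion--exclusion count of $|G\setminus\bigcup_k g_kU_k|$ using the intersection indices $[G:U_i\cap U_j]$, and derive a contradiction from nonnegativity. If that fails, I would fall back on explicit analysis of the permutation action of $G$ on $G/U_3\times G/U_4$.
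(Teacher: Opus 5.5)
The paper does not prove this lemma; it imports it from Margolis--Schnabel. Your plan therefore has to stand on its own, and it has two genuine gaps.

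\textbf{First gap: the wrong invariant.} In this paper $\alpha(U,V)$ is not a double-coset count. It is the integer with $[G:U\cap V]=\lcm(a_U,a_V)\,\alpha(U,V)$, so the product formula gives $|UV|=\alpha(U,V)\,|G|/\gcd(a_U,a_V)$. A coset of $U$ can be disjoint from a coset of $V$ exactly when $UV\neq G$, i.e.\ when $\alpha(U,V)<\gcd(a_U,a_V)$. This is an \emph{upper} bound. Your lower bound $\alpha\ge 2$ is contradicted by the entries $1$ in the very table you are trying to prove. Your expected range $\{2,3\}$ for a $3r_i$/$6r_j$ pair is also wrong: the gcd is $3$, so $3$ is excluded. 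The correct a priori ranges are:
\begin{itemize}
\item $\alpha\in\{1,2\}$ for every pair involving $U_1$ or $U_2$ (gcd $3$);
\item $\alpha\in\{1,\dots,5\}$ for pairs inside $\{3,4,5\}$ (gcd $6$).
\end{itemize}
Steps~1--2 are built on the wrong invariant, and the phrase ``$\alpha=1$-type behaviour in the paper's normalisation'' does not bridge this.

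\textbf{Second gap: the content of the lemma is only announced.} Even with the right definition, you must exclude every other $\alpha$-pattern and produce $3\mid r_2$ and $2\mid r_3$. None of the tools you name can supply this.
\begin{itemize}
\item Data of a single pair force no divisibility. Two distinct subgroups of order $2$ in $S_3$ have indices $3,3$ and $\alpha=2$. Two distinct subgroups of order $3$ in $C_3\times C_6$ have indices $6,6$ and $\alpha=3$. So nothing can be ``read off'' from $\alpha(1,2)=2$ (which is in any case symmetric in $1,2$) or from $\alpha(3,4)=3$.
\item Theorem~B concerns indices only, and $(6r_3,6r_4,6r_5)$ and $(3r_i,6r_3,6r_4,6r_5)$ are already $\Z$-harmonic. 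It therefore yields no $\alpha$-constraints.
\item A density count around $g_1U_1$ only recovers the pairwise condition $U_1U_k\neq G$.
\item Inclusion--exclusion over the chosen cosets only returns $\sum_k 1/a_k\le 1$, because those cosets are pairwise disjoint.
\end{itemize}
What is missing is an argument coupling three or more subgroups at once, together with the simultaneous conditions $g_i^{-1}g_j\notin U_iU_j$.

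Proposition~\ref{prop:triple} shows the kind of input needed. There, $[G:U_i\cap U_j\cap U_k]$ is a multiple of the lcm of the three pairwise indices, while $U_i(U_j\cap U_k)\subseteq U_iU_j\cap U_iU_k$. For instance, suppose $\alpha(1,3)=1$, $\alpha(1,4)=2$, $\alpha(3,4)=3$ and $3\nmid r_1$. If $r_3$ were odd, $[G:U_1\cap U_3\cap U_4]$ would be a multiple of $36r_1r_3r_4$. This gives $|U_1(U_3\cap U_4)|\ge\tfrac23|G|>\tfrac13|G|=|U_1U_3|$, a contradiction. A condition like $2\mid r_3$ emerges from this sort of triple argument, and nothing in your plan plays that role.
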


Here $\alpha(i,j)=\alpha(U_i,U_j)$ is as defined in Section~\ref{sec:prelim}. The remark following the lemma states: ``We are not aware of any group satisfying the subgroup configuration described in Lemma~4.6. Such a group would provide a counterexample to Question~1'' \cite[p.~15]{MargolisSchnabel}.

We resolve both matters.
\begin{enumerate}
\item \textbf{Question~1} has a negative answer: the tuple $(6,6,6,10,15)$ is $A_5$-harmonic but not $\Z$-harmonic (Theorem~\ref{thm:A5}), and it extends to a coset partition of $A_5$ with index multiset $\{6,6,6,10,10,10,15,15,15\}$ (Corollary~\ref{cor:partition}).
\item \textbf{The Lemma~\ref{lem:MS46} configuration} is realised in no group: its index pattern is not $G$-harmonic for any group $G$ (Theorem~\ref{thm:MS46impossible}).
\end{enumerate}
Thus Question~1 fails, but not through the configuration Margolis and Schnabel anticipated.

\section{Preliminaries}\label{sec:prelim}

It is enough to work with finite groups. Indeed, suppose that finitely many
finite-index subgroups $U_1,\ldots,U_n$ of an arbitrary group $G$ are under
consideration, and put
\[
N=\bigcap_{i=1}^n \operatorname{core}_G(U_i).
\]
Then $N$ is a finite-index normal subgroup of $G$ contained in every $U_i$.
The quotient $G/N$ is finite, the indices of the $U_i$ and all their
intersections are preserved on passing to $U_i/N$, and, because each coset
$g_iU_i$ is the full inverse image of $(g_iN)(U_i/N)$, pairwise disjointness
is preserved as well. Thus any putative example in an arbitrary group has a
finite quotient with exactly the same relevant data.

Henceforth, $G$ is finite. For a subgroup $U\le G$ write $a_U=[G:U]$. For subgroups $U,V \le G$ define $\alpha(U,V)$ by
\[
[G:U \cap V] = \lcm(a_U,a_V)\,\alpha(U,V);
\]
this is an integer because $\lcm(a_U,a_V)$ divides $[G:U\cap V]$. The product formula \cite[Lemma~3.2]{MargolisSchnabel} reads
\[
|UV| = \frac{|U|\,|V|}{|U \cap V|}
      = \frac{|G|\,[G:U \cap V]}{a_U a_V}
      = \frac{|G|\,\lcm(a_U,a_V)\,\alpha(U,V)}{a_U a_V}.
\]
We also use the elementary residue obstruction below; recall $\gcd(6,x)=2$ means $x$ is even and $3\nmid x$, while $\gcd(6,y)=3$ means $y$ is an odd multiple of $3$.

\begin{lemma}\label{lem:residue}
Let $(a_1,\ldots,a_n)$ be a tuple of positive integers.
\begin{enumerate}
\item[\rm(i)] If three of the $a_i$ equal $6$, some entry $x$ satisfies $\gcd(6,x)=2$, and some entry $y$ satisfies $\gcd(6,y)=3$, then the tuple is not $\Z$-harmonic.
\item[\rm(ii)] If four of the $a_i$ equal $6$ and some entry $x$ satisfies $\gcd(6,x)=2$, then the tuple is not $\Z$-harmonic.
\end{enumerate}
\end{lemma}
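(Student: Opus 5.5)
Suppose, aiming for a contradiction, that $(r_1,\ldots,r_n)$ is a witness. The plan is to reduce everything to residues modulo $6$, using only the pairs of entries named in the hypotheses. Whenever $\gcd(a_i,a_j)=d$, the witness condition $r_i\not\equiv r_j \pmod d$ depends only on $r_i \bmod 6$ and $r_j \bmod 6$. This holds because every $d$ that occurs here divides $6$.

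The first step uses the three entries equal to $6$. Their witnesses $r_i$ must be pairwise distinct modulo $6$. Identify $\Z/6\cong \Z/2\times\Z/3$ via the Chinese remainder theorem.

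The second step handles an entry $x$ with $\gcd(6,x)=2$. For each $a_i=6$ we have $\gcd(x,6)=2$, so the witness $r_x$ must have parity different from every $r_i$ attached to an entry $6$. Hence all the $6$-entries share a single parity. Symmetrically, an entry $y$ with $\gcd(6,y)=3$ forces all the $6$-entries to avoid one residue class modulo $3$. They therefore take at most two values modulo $3$.

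For (i), combine both constraints. The three $6$-entries then lie in a set of residues modulo $6$ with one fixed parity and at most two classes modulo $3$. That set has at most $1\cdot 2=2$ elements, so it cannot hold three distinct residues. For (ii), the parity constraint alone leaves only $3$ residues modulo $6$ of the fixed parity, which cannot accommodate four distinct residues.

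No step is a genuine obstacle. The only point to check carefully is that the gcd computations are correct. We have $\gcd(6,x)=2$ and $\gcd(6,y)=3$ by hypothesis. The pair $(x,y)$ itself never needs to be used, since its gcd may be $1$.
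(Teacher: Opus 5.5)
Your proof is correct and follows essentially the same route as the paper. The three $6$-entries are distinct modulo $6$, and $x$ forces them to share a parity. For (ii) only three residues of that parity exist. For (i) the $y$-entry's mod-$3$ constraint finishes the argument: the paper notes that $\{0,2,4\}$ and $\{1,3,5\}$ each meet every class modulo $3$, while you count the available residues as $1\cdot 2=2$ via the Chinese remainder theorem, which is the same observation.
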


\begin{proof}
Suppose the tuple is $\Z$-harmonic, with witnesses $r_i$. Entries equal to $6$ have $\gcd=6$ with one another, so their residues are distinct modulo $6$. The entry $x$ has $\gcd(6,x)=2$, so each $6$-entry residue is incongruent to $r_x$ modulo $2$; hence all $6$-entry residues share the parity opposite to $r_x$.

For (ii): four distinct residues modulo $6$ of one parity cannot exist, since each parity class modulo $6$ has exactly three elements.

For (i): three distinct residues modulo $6$ of a single parity are $\{0,2,4\}$ or $\{1,3,5\}$, and each of these meets every residue class modulo $3$. The entry $y$ has $3 \mid \gcd(6,y)$, so $r_y \not\equiv r_i \pmod 3$ for each of the three $6$-entries; this is impossible.
\end{proof}

\section{Answer to Question~1}

\begin{theorem}\label{thm:A5}
The tuple $(6,6,6,10,15)$ is $A_5$-harmonic but not $\Z$-harmonic.\footnote{The $A_5$-harmonic realisation was obtained by Claude Opus~5; see the Acknowledgements.}
\end{theorem}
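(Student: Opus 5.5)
The plan has two independent halves, one for each part of Theorem~\ref{thm:A5}. The negative half follows directly from Lemma~\ref{lem:residue}(i). The tuple $(6,6,6,10,15)$ has three entries equal to $6$. The entry $x=10$ satisfies $\gcd(6,10)=2$, and the entry $y=15$ satisfies $\gcd(6,15)=3$. So the tuple is not $\Z$-harmonic.

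For $A_5$-harmonicity I will work with $A_5\cong \mathrm{PSL}(2,5)$ acting $2$-transitively on the six points of $\Omega=\mathbb{P}^1(\mathbb{F}_5)$. The index-$6$ subgroups are the point stabilisers (Borel subgroups $\cong D_{10}$). I take $U_1=U_2=U_3=U=\mathrm{Stab}(\omega)$ for one fixed point $\omega$, and three distinct left cosets of $U$. A left coset $gU$ is exactly $\{h: h(\omega)=g(\omega)\}$, so these three cosets are automatically disjoint. Their union is $\{h : h(\omega)\in\{a,b,c\}\}$ for three chosen points, and its complement is $C=\{h : h(\omega)\in\{d,e,f\}\}$, a set of $30$ elements. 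It remains to place a coset of an index-$10$ subgroup $S\cong S_3$ and a coset of an index-$15$ subgroup $V\cong V_4$ disjointly inside $C$.

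The key observation is that for any subgroup $H$, the values $\{h(\omega): h\in gH\}$ form $g(H\omega)$. Hence $gH\subseteq C$ if and only if $g$ maps the $H$-orbit of $\omega$ into $\{d,e,f\}$. This requires the orbit $H\omega$ to have size at most $3$.
\begin{itemize}
\item For $V\cong V_4$: orbit sizes on $6$ points divide $4$, and each involution of $\mathrm{PSL}(2,5)$ fixes exactly two points. From this I expect orbit structure $2+4$ or $2+2+2$, so $V\omega$ can be taken of size $2$ by a suitable choice of the conjugate $V$.
\item For $S\cong S_3$: elements of order $3$ act fixed-point-freely as $3+3$. I choose a conjugate of $S_3$ whose involutions preserve the two $3$-cycle orbits rather than swapping them. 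Then $S$ has two orbits of size $3$, and $g$ can send $S\omega$ onto $\{d,e,f\}$. If the $S_3$ subgroups of $\mathrm{PSL}(2,5)$ all turn out to be transitive on $\Omega$, I would instead try the three index-$6$ cosets with distinct base points $p_i$ and common target.
\end{itemize}

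The main obstacle is the final step: choosing $g_4\in A_5$ and $g_5\in A_5$ so that $g_4S$ and $g_5V$ both lie in $C$ and are also disjoint from each other. Both cosets are small ($6$ and $4$ elements inside $30$). Moreover $|SV|=|S||V|/|S\cap V|=24<60$, so $S\cap V=1$ and the double cosets $SxV$ do not cover $A_5$, which shows disjoint cosets exist in general. The delicate part is achieving disjointness while staying inside $C$. Two cosets $g_4S$ and $g_5V$ are disjoint precisely when $g_4^{-1}g_5\notin SV$. Since there is a lot of freedom in $g_4$ and $g_5$ (each can be modified by an element of $\mathrm{Stab}$ of the relevant orbit image), I would simply exhibit explicit permutations in $A_5$ (written on $\{1,\dots,5\}$) for $U$, $S$, $V$ and the five representatives. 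I would then verify the pairwise disjointness of all ten pairs of cosets directly. This is a finite check, which I expect to be routine once a concrete choice is fixed.
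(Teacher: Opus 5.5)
Your non-$\Z$-harmonicity argument is the paper's, and your reduction for the positive half is sound. You take three cosets $\{h:h(\omega)=x\}$, $x\in\{a,b,c\}$, of one point stabiliser of $A_5\cong\mathrm{PSL}(2,5)$ acting on $\Omega=\mathbb{P}^1(\mathbb{F}_5)$, and you use the criterion $gH\subseteq C\iff g(H\omega)\subseteq\{d,e,f\}$. But the existence of $g_4,g_5$ is the entire content of $A_5$-harmonicity. You leave it as ``the main obstacle'' for a search you do not carry out, so the positive half is not proved.

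The step ``$g$ can send $S\omega$ onto $\{d,e,f\}$'' also fails in general, because $A_5$ is $2$-transitive but not $3$-homogeneous on $\Omega$. A nontrivial element of $A_5$ fixes at most two points of $\Omega$, so the setwise stabiliser of a triple has order at most $6$. Each of the ten subgroups $S_3\le A_5$ has two orbits of size $3$, and these $20$ triples are distinct. So every triple is an orbit of exactly one $S_3$, which is its full stabiliser. Since $S_3$ is self-normalising, the triples form two $A_5$-orbits of size $10$, with each triple and its complement in different orbits. Hence for a given $S$ there may be no coset of $S$ inside $C$. When there is one it is unique, since $\{h:h(S\omega)=\{d,e,f\}\}$ is itself a single left coset of $S$. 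So the freedom in $g_4$ you appeal to produces no new cosets.

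Two smaller points. Every $V_4\le A_5$ has orbits $2+2+2$ on $\Omega$: Burnside gives $(6+2+2+2)/4=3$ orbits, and a fixed point would put $V$ inside a $D_{10}$. So ``$2+4$'' cannot occur. Also, $S\cap V=1$ need not hold, since an $S_3$ and a $V_4$ can share an involution; your $|SV|=24$ presupposes it.

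The gap closes within your framework. The missing idea is to tie $\{d,e,f\}$ to $S\omega$ rather than fixing it first, and to choose $V\omega\not\subseteq S\omega$.

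\begin{itemize}
\item Take any $S\cong S_3$. Its fixed-point counts $6,0,0,2,2,2$ give two orbits, necessarily of sizes $3+3$.
\item Put $T=S\omega$, $\{d,e,f\}=T$, $\{a,b,c\}=\Omega\setminus T$ and $g_4=1$. Then $S\subseteq C$.
\item Pick $q\in\Omega\setminus T$ and let $V$ be the setwise stabiliser of $\{\omega,q\}$. By $2$-transitivity it has order $60/15=4$, hence index $15$.
\item Choose $g_5$ with $g_5\{\omega,q\}\subseteq T$.
\end{itemize}

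Every element of $g_5V$ maps $\omega$ and $q$ into $T$, so $g_5V\subseteq C$. Every element of $S$ maps $q$ into the $S$-invariant set $\Omega\setminus T$. Hence $S\cap g_5V=\emptyset$, and the five cosets are pairwise disjoint.

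Completed this way, your route gives a computer-free proof. The paper's realisation has the same shape (three cosets of one $D\cong D_{10}$ plus cosets of $T$ and $V$), but it is given by explicit permutations with machine-checked disjointness.
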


\begin{proof}
\smallskip
\noindent\emph{$A_5$-harmonicity.}

Let $G = A_5$ act on $\{1,2,3,4,5\}$, and take
\[
\begin{aligned}
D &= \langle (1\,2\,5\,3\,4),\ (1\,2)(4\,5)\rangle \cong D_{10}, && |D|=10,\ [G:D]=6,\\
T &= \langle (2\,3\,5),\ (1\,4)(2\,3)\rangle \cong S_3, && |T|=6,\ [G:T]=10,\\
V &= \langle (1\,2)(3\,4),\ (1\,3)(2\,4)\rangle \cong C_2\times C_2, && |V|=4,\ [G:V]=15.
\end{aligned}
\]
($D$ is the normaliser of a Sylow $5$-subgroup and $V$ is a Sylow $2$-subgroup.) With
\[
g_1 = (1\,5\,3),\quad g_2 = (1\,4\,2\,3\,5),\quad g_3 = (1\,5\,2\,3\,4),\quad g_4 = (1\,2\,4),\quad g_5 = (3\,4\,5),
\]
the cosets $g_1D,\ g_2D,\ g_3D,\ g_4T,\ g_5V$ have sizes $10,10,10,6,4$, summing to $40$, so pairwise disjointness is equivalent to their union having $40$ elements. Recall $g_iU_i\cap g_jU_j=\emptyset$ if and only if $g_j^{-1}g_i\notin U_jU_i$. Since $g_1^{-1}g_2$, $g_1^{-1}g_3$, $g_2^{-1}g_3\notin D$, the three cosets of $D$ are distinct and hence pairwise disjoint; the remaining seven intersections ($g_iD\cap g_4T$ and $g_iD\cap g_5V$ for $i\le 3$, and $g_4T\cap g_5V$) are checked to be empty, so the union has exactly $40$ elements. This finite verification is computer-assisted; the code excerpt in the appendix computes all ten intersection sizes, and the complete checks are in the ancillary file. Hence $(6,6,6,10,15)$ is $A_5$-harmonic.

\smallskip
\noindent\emph{Failure of $\Z$-harmonicity.}

The tuple contains three entries equal to $6$. Its remaining entries satisfy
$\gcd(6,10)=2$ and $\gcd(6,15)=3$, so Lemma~\ref{lem:residue}(i) shows that
it is not $\Z$-harmonic.
\end{proof}

\subsection{Extension to a coset partition}

\begin{corollary}\label{cor:partition}
The five cosets of Theorem~\ref{thm:A5} extend to a coset partition of $A_5$ with index multiset $\{6,6,6,10,10,10,15,15,15\}$. Consequently the tuple $(6,6,6,10,10,10,15,15,15)$ is $A_5$-harmonic but not $\Z$-harmonic: Question~1 fails already for tuples arising from genuine coset partitions.
\end{corollary}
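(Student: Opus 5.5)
The complement of the union of the five cosets in Theorem~\ref{thm:A5} has $60-40=20$ elements. We will tile these $20$ elements by cosets of index $10$ (size $6$) and index $15$ (size $4$). The target multiset adds two cosets of index $10$ and three cosets of index $15$: $2\cdot 6+3\cdot 4=24\neq 20$. So we first recount. The full multiset $\{6,6,6,10,10,10,15,15,15\}$ has sizes $3\cdot10+3\cdot6+3\cdot4=60$, consistent with a partition of $A_5$. Relative to the five given cosets, the new pieces are two cosets of index $10$ and two of index $15$, with sizes $12+8=20$. This matches the complement, so the plan is to write the complement $R=A_5\setminus(g_1D\cup g_2D\cup g_3D\cup g_4T\cup g_5V)$ as a disjoint union of two left cosets of conjugates of $T$ (the index-$10$ subgroups are exactly the ten conjugates of $S_3$) and two left cosets of Sylow $2$-subgroups (the five conjugates of $V$).

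First, compute $R$ explicitly by listing the elements of $A_5$ and removing the $40$ covered ones; the data of Theorem~\ref{thm:A5} make this mechanical. Next, enumerate all left cosets $gT'$ with $T'$ a conjugate of $T$ (there are $10\cdot 10=100$ such cosets, possibly with repeats as sets) and all left cosets $gV'$ with $V'$ a Sylow $2$-subgroup ($5\cdot15=75$). Keep only those contained in $R$. Then search for two $S_3$-cosets and two Klein-four cosets among these that are pairwise disjoint. Disjointness together with the size count $20=|R|$ gives that they cover $R$. The search space is tiny, so an exhaustive search (as in the ancillary file) settles existence. For the written proof one exhibits the four cosets explicitly, say $h_6T_6,h_7T_7,h_8V_8,h_9V_9$ with the subgroups given by generators, and verifies disjointness via the criterion $g_j^{-1}g_i\notin U_jU_i$, or simply by listing elements.

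Finally, the consequence follows directly. The nine cosets partition $A_5$, so in particular they are pairwise disjoint, and the tuple $(6,6,6,10,10,10,15,15,15)$ is $A_5$-harmonic. It contains three $6$'s, an entry $10$ with $\gcd(6,10)=2$, and an entry $15$ with $\gcd(6,15)=3$. Hence Lemma~\ref{lem:residue}(i) shows it is not $\Z$-harmonic. Equivalently, it contains the non-$\Z$-harmonic subtuple $(6,6,6,10,15)$, and any subtuple of a $\Z$-harmonic tuple is $\Z$-harmonic.

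The main obstacle is the existence step: nothing forces $R$ to be tileable by the prescribed coset types. If no tiling exists for the given $g_i$, I would rerun the search jointly over the choice of the original five cosets, using the same subgroups $D,T,V$ up to conjugacy. I expect the authors' $g_i$ were chosen with this extension in mind.
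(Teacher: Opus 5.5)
Your framework is the paper's own: the complement $R$ of the five cosets has $20$ elements, you tile it by two cosets of index $10$ and two of index $15$, and you finish with Lemma~\ref{lem:residue}(i). Your alternative finish, via the non-$\Z$-harmonic subtuple $(6,6,6,10,15)$, is also valid, and the second sentence of the corollary follows correctly once a partition exists.

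\textbf{The gap.} The first sentence is the entire substance of the corollary, and you never establish it. You describe a search and assert that it ``settles existence'', but you neither carry it out nor name the four cosets. You also concede that a tiling might not exist. For a pure existence claim about a specific finite configuration, the proof consists of exhibiting the configuration; a procedure for looking for one is not a proof.

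Your fallback would also not prove the statement as written. The corollary concerns the particular five cosets of Theorem~\ref{thm:A5}. Re-choosing them would only give some other partition with the same index multiset.

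\textbf{The missing witness.} The paper supplies exactly this. Reading $gH=\{g\circ h: h\in H\}$ with right-to-left composition, as in its appendix, the four cosets are
$(1\,2)(3\,4)V$, $(1\,2)(4\,5)V$, $(1\,4\,5)T_2$ and $(1\,4\,3\,2\,5)T_3$, where $T_2=\langle(1\,3\,4),(1\,3)(2\,5)\rangle$ and $T_3=\langle(1\,2\,4),(1\,2)(3\,5)\rangle$ are conjugates of $T$.

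\begin{itemize}
\item Since $(1\,2)(3\,4)\in V$, the first piece is just $V$; indeed the identity is not covered by the five original cosets.
\item Both index-$15$ pieces are cosets of the same subgroup $V$.
\end{itemize}

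With these named, your own verification scheme closes the argument: each coset lies in $R$, they are pairwise disjoint, and $4+4+6+6=20=|R|$. Carrying this out by hand, the four cosets cover each of the $20$ elements of $R$ exactly once.
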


\begin{proof}
Adjoin to the five cosets of Theorem~\ref{thm:A5} the four cosets
\[
(1\,2)(3\,4)\,V,\quad (1\,2)(4\,5)\,V,\quad (1\,4\,5)\,T_2,\quad (1\,4\,3\,2\,5)\,T_3,
\]
where
\[
T_2 = \langle (1\,3\,4),\ (1\,3)(2\,5)\rangle,\qquad
T_3 = \langle (1\,2\,4),\ (1\,2)(3\,5)\rangle
\]
are further copies of $S_3$ (index $10$). These nine cosets are pairwise disjoint and their union is all of $A_5$ (a finite check; see the appendix and the ancillary script), giving a coset partition with the stated index multiset and $\tfrac{3}{6}+\tfrac{3}{10}+\tfrac{3}{15}=1$. A coset partition has pairwise disjoint parts, so its index tuple is $G$-harmonic; and $(6,6,6,10,10,10,15,15,15)$ satisfies the hypotheses of Lemma~\ref{lem:residue}(i) (three $6$'s, the entry $10$, the entry $15$), so it is not $\Z$-harmonic.
\end{proof}

The nine indices are not pairwise distinct; the partition carries multiplicity, consistent with the Herzog--Sch\"onheim conjecture (see Section~\ref{sec:consequences}).

\section{The Lemma~\ref{lem:MS46} configuration is impossible}

We isolate the arithmetic core as a statement about three subgroups.

\begin{proposition}\label{prop:triple}
Let $U_3,U_4,U_5$ be subgroups of a group $G$ with $[G:U_j]=6r_j$ for $3\le j\le 5$, where $r_3,r_4,r_5$ are pairwise coprime and $3\nmid r_3r_4r_5$. Then it is impossible that
\[
\alpha(U_3,U_4)=3,\qquad \alpha(U_3,U_5)=\alpha(U_4,U_5)=1.
\]
\end{proposition}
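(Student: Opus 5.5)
The plan is to get a contradiction by computing the index of the triple intersection $W=U_3\cap U_4\cap U_5$ in two ways: a divisibility lower bound coming from $U_3\cap U_4$, and a counting upper bound coming from inside $U_5$. By the reduction in Section~\ref{sec:prelim} we may take $G$ finite, though only finiteness of the indices involved is used.

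\textbf{Step 1: translate the $\alpha$-values into indices.} Since $r_3,r_4,r_5$ are pairwise coprime, $\lcm(6r_i,6r_j)=6r_ir_j$ for $i\ne j$. The hypotheses then read
\[
[G:U_3\cap U_4]=18r_3r_4,\qquad [G:U_3\cap U_5]=6r_3r_5,\qquad [G:U_4\cap U_5]=6r_4r_5 .
\]
Dividing by $[G:U_5]=6r_5$ gives
\[
[U_5:U_3\cap U_5]=r_3,\qquad [U_5:U_4\cap U_5]=r_4 .
\]

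\textbf{Step 2: upper bound.} Apply the standard inequality $[A:B\cap C]\le[A:B]\,[A:C]$ for subgroups $B,C$ of a group $A$. It holds because the map $a(B\cap C)\mapsto(aB,aC)$ is injective. Take $A=U_5$, $B=U_3\cap U_5$ and $C=U_4\cap U_5$, so that $B\cap C=W$. This gives $[U_5:W]\le r_3r_4$, and therefore
\[
[G:W]\le 6r_3r_4r_5 .
\]

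\textbf{Step 3: lower bound by divisibility.} Since $W\le U_3\cap U_4$ and $W\le U_5$, the index $[G:W]$ is divisible by both $18r_3r_4$ and $6r_5$. Now $r_5$ is coprime to $r_3r_4$, and $3\nmid r_5$. Hence $\gcd(18r_3r_4,6r_5)=6$ if $r_5$ is odd, or $6$ times a power of $2$ dividing $2r_5$ at most. In either case $\lcm(18r_3r_4,6r_5)\ge 18r_3r_4r_5$; concretely, $18r_3r_4$ and $r_5$ are coprime up to the factor $2$, and $\lcm(18r_3r_4,6r_5)=18r_3r_4r_5$ because $r_5\mid 6r_5$ and the only common prime of $18r_3r_4$ and $6r_5$ beyond what is already accounted for is $2$, which contributes at most $\gcd(2,r_5)$. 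It follows that $[G:W]\ge 18r_3r_4r_5>6r_3r_4r_5$, contradicting Step 2.

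The only delicate point is this lcm computation in Step 3. It is exactly where the hypothesis $3\nmid r_3r_4r_5$ is used: the extra factor $3$ in $\alpha(U_3,U_4)$ cannot be absorbed by $r_5$. I would write out the $p$-adic valuations for $p=2$, $p=3$ and the primes dividing $r_3r_4r_5$ separately to make this airtight.
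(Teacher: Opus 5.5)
Your proof is correct. It is a streamlined version of the paper's argument rather than a genuinely new one. Both proofs bound $[G:U_3\cap U_4\cap U_5]$ below by an lcm and above by a single counting inequality. Your Step~2 bound $[U_5:W]\le r_3r_4$ is equivalent, after dividing by $[U_5:U_4\cap U_5]=r_4$, to $[U_4\cap U_5:W]\le[U_5:U_3\cap U_5]$. That is exactly the paper's comparison $|U_3(U_4\cap U_5)|\le|U_3U_5|$, written in index form.

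The paper reaches this comparison by a detour. It computes $|U_3U_4|=|G|/2$ and $|U_3U_5|=|G|/6$, then uses $U_3(U_4\cap U_5)\subseteq U_3U_4$ to force $m=1$, and only then compares with $U_3U_5$. That step is unnecessary, since $\frac m2|G|\le|G|/6$ already fails for every $m\ge 1$. Your version avoids product sets and the $m=1$ step entirely. It also makes transparent that, of the hypothesis $3\nmid r_3r_4r_5$, only $3\nmid r_5$ is actually used.

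One sentence in Step~3 should be replaced. The gcd is never ``$6$ times a power of $2$''. In fact $\gcd(18r_3r_4,6r_5)=6\gcd(3r_3r_4,r_5)=6$ always, because $3\nmid r_5$ and $\gcd(r_3r_4,r_5)=1$. Hence $\lcm(18r_3r_4,6r_5)=18r_3r_4r_5$ with no case split. If the gcd really carried an extra factor of $2$, the lcm would be smaller than $18r_3r_4r_5$, which contradicts the bound you then use. State the one-line computation instead of the hedge.
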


\begin{proof}
Suppose these values hold. Since the $r_j$ are pairwise coprime, $\gcd(6r_i,6r_j)=6$ and hence $\lcm(6r_i,6r_j)=6r_ir_j$ for distinct $i,j$. Therefore
\[
[G:U_3\cap U_4]=18r_3r_4,\qquad
[G:U_3\cap U_5]=6r_3r_5,\qquad
[G:U_4\cap U_5]=6r_4r_5,
\]
and the product formula gives
\[
|U_3U_4|=\frac{|G|}{2},\qquad |U_3U_5|=|U_4U_5|=\frac{|G|}{6}.
\]

Let $K=[G:U_3\cap U_4\cap U_5]$. Since $U_3\cap U_4\cap U_5$ lies in each pairwise intersection, $K$ is a common multiple of $18r_3r_4$, $6r_3r_5$ and $6r_4r_5$. We claim
\[
\lcm(18r_3r_4,\ 6r_3r_5,\ 6r_4r_5)=18r_3r_4r_5.
\]
Indeed, write $v_p$ for the $p$-adic valuation. At $p=3$ the term $18r_3r_4$ has $v_3=2$ (since $3\nmid r_3r_4r_5$) while $6r_3r_5$ and $6r_4r_5$ have $v_3=1$, so the maximum is $2=v_3(18r_3r_4r_5)$. At $p=2$, by pairwise coprimality at most one $r_j$ is even; the two terms containing that $r_j$ both have $2$-adic valuation $1+v_2(r_j)$ and the third has valuation $1$, so the maximum is $1+v_2(r_j)=v_2(18r_3r_4r_5)$ (and equals $1$ if every $r_j$ is odd). At any other prime $p$, at most one $r_j$ is divisible by $p$, and it occurs in two of the three terms, so the maximum is $v_p(r_j)=v_p(18r_3r_4r_5)$. Thus the claimed lcm holds, and in particular $18r_3r_4r_5 \mid K$, so we may write $K=18r_3r_4r_5\,m$ with $m\ge 1$.

By the product formula,
\[
|U_3(U_4\cap U_5)|
=\frac{|U_3|\,|U_4\cap U_5|}{|U_3\cap U_4\cap U_5|}
=\frac{\dfrac{|G|}{6r_3}\cdot\dfrac{|G|}{6r_4r_5}}{\dfrac{|G|}{18r_3r_4r_5\,m}}
=\frac{m}{2}\,|G|.
\]
Because $U_4\cap U_5\subseteq U_4$, we have $U_3(U_4\cap U_5)\subseteq U_3U_4$, so
\[
\frac{m}{2}|G| \le |U_3U_4|=\frac{|G|}{2},
\]
forcing $m=1$. Then $|U_3(U_4\cap U_5)|=|U_3U_4|$, and since $U_3(U_4\cap U_5)\subseteq U_3U_4$ are finite sets of equal cardinality (no claim that either is a subgroup is needed), they coincide: $U_3(U_4\cap U_5)=U_3U_4$. But $U_4\cap U_5\subseteq U_5$ gives $U_3(U_4\cap U_5)\subseteq U_3U_5$, whence
\[
\frac{|G|}{2}=|U_3U_4|=|U_3(U_4\cap U_5)|\le |U_3U_5|=\frac{|G|}{6},
\]
a contradiction.
\end{proof}

\begin{theorem}\label{thm:MS46impossible}
Let $r_1,\ldots,r_5$ be pairwise coprime positive integers with $r_1$ and $r_2$ odd. Then, for every group $G$, the tuple
\[
(3r_1,3r_2,6r_3,6r_4,6r_5)
\]
is not $G$-harmonic.\footnote{This impossibility proof was obtained by Z.ai GLM-5.2; see the Acknowledgements.}
\end{theorem}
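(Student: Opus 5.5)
Suppose for contradiction that the tuple $(3r_1,3r_2,6r_3,6r_4,6r_5)$ is $G$-harmonic. By the reduction in Section~\ref{sec:prelim} we may take $G$ finite, with subgroups $U_1,\dots,U_5$ of the stated indices and pairwise disjoint cosets. The hypotheses of Theorem~\ref{thm:MS46impossible} are exactly those of Lemma~\ref{lem:MS46}. So after permuting $\{1,2\}$ and $\{3,4,5\}$, Lemma~\ref{lem:MS46} gives the listed table of $\alpha$-values together with $3\mid r_2$ and $2\mid r_3$. From the table we read off
\[
\alpha(U_3,U_4)=3,\qquad \alpha(U_3,U_5)=\alpha(U_4,U_5)=1.
\]
This is precisely the configuration excluded by Proposition~\ref{prop:triple}. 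It remains only to check the hypotheses of that proposition.

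The indices $[G:U_j]=6r_j$ for $j=3,4,5$ and the pairwise coprimality of $r_3,r_4,r_5$ are given; a permutation of $\{3,4,5\}$ preserves both. The one nontrivial hypothesis is $3\nmid r_3r_4r_5$, and this is where the extra conclusion of Lemma~\ref{lem:MS46} is used. The lemma gives $3\mid r_2$. Since $r_2$ is coprime to each of $r_3,r_4,r_5$, none of these is divisible by $3$, so $3\nmid r_3r_4r_5$. One should also confirm that $r_2$ here means the $r$-value of whichever subgroup sits in position $2$ after the permutation. This is harmless: the $r_i$ are permuted along with the $U_i$, and the whole family stays pairwise coprime.

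With the hypotheses verified, Proposition~\ref{prop:triple} gives a contradiction, so no group $G$ realises the tuple. The only real obstacle is bookkeeping: we must use the divisibility $3\mid r_2$ supplied by Lemma~\ref{lem:MS46}, and not just the $\alpha$-table, to obtain $3\nmid r_3r_4r_5$. Without that condition the $3$-adic step in the proof of Proposition~\ref{prop:triple} (the lcm computation) fails. The condition $2\mid r_3$ is not needed.
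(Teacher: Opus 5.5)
Your proposal is correct and is essentially the paper's proof: you apply Lemma~\ref{lem:MS46} to get $\alpha(U_3,U_4)=3$, $\alpha(U_3,U_5)=\alpha(U_4,U_5)=1$ and $3\mid r_2$, deduce $3\nmid r_3r_4r_5$ from pairwise coprimality, and conclude with Proposition~\ref{prop:triple}. Your side remarks are accurate but not needed for the argument; they concern the reduction to finite $G$, the relabelling of the $r_i$ along with the $U_i$, and the fact that $2\mid r_3$ is never used.
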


\begin{proof}
Suppose instead that this tuple is $G$-harmonic. Choose subgroups $U_1,\ldots,U_5$ and pairwise disjoint cosets as in the definition. By Lemma~\ref{lem:MS46}, after relabelling $\{3,4,5\}$ we may assume
\[
\alpha(U_3,U_4)=3,\qquad \alpha(U_3,U_5)=\alpha(U_4,U_5)=1,\qquad 3\mid r_2 .
\]
Since the $r_i$ are pairwise coprime and $3\mid r_2$, we have $3\nmid r_3r_4r_5$. Proposition~\ref{prop:triple}, applied to $U_3,U_4,U_5$, then yields a contradiction.
\end{proof}

\begin{remark}
Theorem~\ref{thm:MS46impossible} strengthens \cite[Proposition~4.7]{MargolisSchnabel}, which treated only the case $r_1=1$, i.e.\ where a subgroup of index exactly $3$ occurs. That index-$3$ hypothesis is unnecessary: the contradiction uses only $U_3,U_4,U_5$ together with the $\alpha$-values and the divisibility $3\mid r_2$ supplied by Lemma~\ref{lem:MS46}. This does rely on Lemma~\ref{lem:MS46}, a nontrivial result of Margolis and Schnabel; but their own proof of Proposition~4.7 invokes it as well, so no additional input is being smuggled in.
\end{remark}

\section{Further examples for Question~1}

The same phenomenon produces further counterexamples to Question~1.

\begin{center}
\begin{tabular}{|c|c|c|c|}
\hline
Tuple & Group & Order & non-$\Z$-harmonic via\\
\hline
$(6,6,6,10,15)$ & $A_5$ & $60$ & Lemma~\ref{lem:residue}(i)\\
$(6,6,6,15,20)$ & $A_5$ & $60$ & Lemma~\ref{lem:residue}(i)\\
$(6,6,10,12,15)$ & $A_5$ & $60$ & direct residue check\\
$(4,6,6,6,6)$ & $C_2 \times S_4$ & $48$ & Lemma~\ref{lem:residue}(ii)\\
$(6,6,6,6,16)$ & $C_2 \times S_4$ & $48$ & Lemma~\ref{lem:residue}(ii)\\
\hline
\end{tabular}
\end{center}

For all five rows, explicit subgroups and coset representatives, together with the disjointness checks, are provided in the ancillary file \path{verify-counterexample.g}; the family for the final row is also displayed below. Each tuple fails to be $\Z$-harmonic. For $(6,6,6,15,20)$ the entry $20$ has $\gcd(6,20)=2$ and $15$ has $\gcd(6,15)=3$, so Lemma~\ref{lem:residue}(i) applies. For $(4,6,6,6,6)$ and $(6,6,6,6,16)$ the entries $4$ and $16$ have $\gcd(6,\cdot)=2$, so Lemma~\ref{lem:residue}(ii) applies.

There is also a short direct obstruction for $(6,6,10,12,15)$. The entry $10$ forces the two residues belonging to the $6$-entries to have the same parity. The entry $15$ forces their reductions modulo $3$ to be the two classes different from its own. Thus, within their common parity class modulo $6$, the two $6$-residues occupy exactly the two positions whose reductions modulo $3$ differ from the $15$-residue. A residue belonging to the entry $12$ would have to have that same parity, by comparison with the $10$-entry, and one of those same two reductions modulo $3$, by comparison with the $15$-entry. It would therefore coincide modulo $6$ with one of the two $6$-residues, a contradiction.

The two order-$48$ rows show that simplicity of $G$ is not the source of the phenomenon. Realise $C_2\times S_4$ as $\langle(1\,2)\rangle\times\operatorname{Sym}\{3,4,5,6\}\le S_6$ and take
\[
A=\langle(3\,4\,5),(4\,5\,6)\rangle,\qquad P=\langle(3\,6),(4\,5),(3\,4)(5\,6)\rangle,
\]
\[
Q=\langle(1\,2)(3\,4\,5\,6),(1\,2)(3\,5)\rangle,\qquad R=\langle(1\,2),(3\,4)(5\,6),(3\,5)(4\,6)\rangle,
\]
of orders $12,8,8,8$, so $A$ has index $4$ and $P,Q,R$ index $6$. With representatives $(1\,2)$, $1$, $(4\,5\,6)$, $(1\,2)(4\,5)$, $(1\,2)(3\,5\,4\,6)$ for the parts $A,P,P,Q,R$ respectively, the five cosets are pairwise disjoint, realising $(4,6,6,6,6)$; the verification is in the ancillary file.

For completeness, the tuple $(6,6,6,6,16)$ is realised in the same group by the four order-$8$ subgroups
\[
\begin{aligned}
K_1&=\langle(1\,2)(3\,6\,4\,5),(1\,2)(3\,5)(4\,6)\rangle,\\
K_2&=\langle(3\,6),(3\,5\,6\,4)\rangle,\\
K_3&=\langle(1\,2)(3\,6\,4\,5),(1\,2)(3\,4)\rangle,\\
K_4&=\langle(1\,2),(1\,2)(3\,5)(4\,6),(1\,2)(3\,6)(4\,5)\rangle
\end{aligned}
\]
and the order-$3$ subgroup $C=\langle(4\,6\,5)\rangle$. With representatives
\[
1,\quad (3\,4\,5\,6),\quad (1\,2)(3\,5),\quad
(1\,2)(3\,6),\quad (1\,2)
\]
for $K_1,K_2,K_3,K_4,C$, respectively, the five left cosets are pairwise disjoint. Their subgroup indices are $6,6,6,6,16$, as required. The smallest counterexamples we have found therefore have order $48$.

\section{Consequences}\label{sec:consequences}

Theorem~\ref{thm:A5} shows that \cite[Theorem~B]{MargolisSchnabel} is sharp: Question~1 has a positive answer for $n\le 4$ but a negative one at $n=5$. In particular, Ginosar's question cannot be used to deduce the Herzog--Sch\"onheim conjecture from the Davenport--Mirsky--Newman--Rado theorem.

The counterexamples do not bear on the Herzog--Sch\"onheim conjecture itself. Corollary~\ref{cor:partition} even exhibits a genuine coset partition of $A_5$ whose index tuple is not $\Z$-harmonic, so the obstruction is not that the disjoint cosets fail to form a partition; rather, every counterexample we produce carries repeated indices, whereas the conjecture concerns partitions with pairwise \emph{distinct} indices. (The partial family of Theorem~\ref{thm:A5} has $\sum 1/a_i=\tfrac23$ and the partition of Corollary~\ref{cor:partition} has $\sum 1/a_i=1$; both have multiplicity.)

\section*{Acknowledgements}

The $A_5$-harmonic realisation of $(6,6,6,10,15)$ in Theorem~\ref{thm:A5}, answering Ginosar's question negatively, was obtained by Anthropic's Claude Opus~5 under the author's prompting. The proof that the Lemma~\ref{lem:MS46} configuration is realised in no group (Theorem~\ref{thm:MS46impossible}) was obtained by Z.ai's GLM-5.2, likewise under prompting. Claude Opus~5 and GLM-5.2 were also prompted to critique and cross-check each other's arguments; no transcript of this exchange was retained, but the corrections it produced are reflected in the final proofs. Computations were performed and independently cross-checked with GAP \cite{GAP} and with custom code. All mathematical statements, proofs, and verifications are the author's responsibility. The author thanks Y. Ginosar and L. Margolis for discussions and encouragement.

\appendix
\section{GAP verification}

The code excerpt below verifies the two $A_5$ facts of Section~3. The ancillary
computations are in
\begin{center}
\path{verify-counterexample.g}.
\end{center}
That file also contains the $\Z$-harmonicity search (\texttt{IsZHarmonic}),
checks that every $4$-subtuple of $(6,6,6,10,15)$ is $\Z$-harmonic, and gives
explicit $G$-harmonic families for all five tuples listed in Section~5. The
second $C_2\times S_4$ family is also displayed explicitly in that section.

GAP's permutation product is left-to-right, so $\{u\,g : u\in H\}$ is the left coset $g\,H$; the helper \texttt{Coset(H,g)} below returns it. (Equivalently one may test the right cosets $H\,g^{-1}$: inversion $x\mapsto x^{-1}$ is a disjointness-preserving bijection carrying one family to the other.)

\begingroup
\footnotesize
\begin{verbatim}
G  := AlternatingGroup(5);;
D  := Group( (1,2,5,3,4), (1,2)(4,5) );;   # D_10, index  6
T  := Group( (2,3,5), (1,4)(2,3) );;       # S_3,  index 10
V  := Group( (1,2)(3,4), (1,3)(2,4) );;    # V_4,  index 15
T2 := Group( (1,3,4), (1,3)(2,5) );;       # S_3,  index 10
T3 := Group( (1,2,4), (1,2)(3,5) );;       # S_3,  index 10

Coset := function( H, g ) return Set( List( Elements(H), u -> u*g ) ); end;;

# the five pairwise-disjoint cosets of Theorem 3.1
L5 := [ Coset(D,(1,5,3)), Coset(D,(1,4,2,3,5)), Coset(D,(1,5,2,3,4)),
        Coset(T,(1,2,4)), Coset(V,(3,4,5)) ];;
List([D,T,V], H -> Index(G,H));                       # [ 6, 10, 15 ]
List(L5, c -> Size(G)/Size(c));                       # [ 6, 6, 6, 10, 15 ]
List(Combinations([1..5],2),
     p -> Size(Intersection(L5[p[1]],L5[p[2]])));     # [ 0,0,0,0,0,0,0,0,0,0 ]
Size(Union(L5));                                      # 40  (= 10+10+10+6+4)

# extension to a coset partition, profile (6^3,10^3,15^3)  (Corollary 3.2)
L9 := Concatenation( L5,
      [ Coset(V,(1,2)(3,4)), Coset(V,(1,2)(4,5)),
        Coset(T2,(1,4,5)),   Coset(T3,(1,4,3,2,5)) ] );;
SortedList( List(L9, c -> Size(G)/Size(c)) );  # [6,6,6,10,10,10,15,15,15]
ForAll(Combinations([1..9],2),
       p -> Intersection(L9[p[1]],L9[p[2]]) = []);    # true
Union(L9) = Set(Elements(G));                         # true
\end{verbatim}
\endgroup

\end{document}